\documentclass[12pt,reqno]{amsart}

\usepackage[a4paper]{geometry}
\geometry{hmargin=2cm,top=2.5cm,bottom=2.5cm}

\usepackage[T2A]{fontenc}
\usepackage{amssymb,amsmath,amsthm}
\usepackage[english]{babel}
\usepackage{xcolor}

\newtheorem{theorem}{Theorem}
\newtheorem{proposition}{Proposition}
\newtheorem{corollary}{Corollary}

\theoremstyle{definition}

\newtheorem{definition}{Definition}
\newtheorem{example}{Example}
\newtheorem{remark}{Remark}

\def\Res{\mathop{\fam 0 Res}}
\def\oo#1{\mathbin{ {}_{(#1)}}}
\def\soo#1{\mathbin{ {*}_{(#1)}}}

\title[On the locality of formal distributions]{On the locality of formal distributions over pre-Lie and Novikov algebras}

\thanks{The work was supported by the Program of Fundamental Research RAS (project FWNF-2022-0002).}

\author{L.~A. Bokut, P.~S. Kolesnikov}

\address{Sobolev Institute of Mathematics, Novosibirsk, Russia}

\begin{document}

\begin{abstract}
The Dong Lemma in the theory of vertex algebras states that the locality
property of formal distributions over a Lie algebra is preserved
under the action of a vertex operator. A~similar statement is known for
associative algebras. We study local formal distributions over pre-Lie
(right-symmetric), pre-associative (dendriform), and Novikov algebras
to show that the analogue of the Dong Lemma holds for Novikov algebras
but does not hold for pre-Lie and pre-associative ones.
\end{abstract}

\maketitle

\section*{Introduction}

Conformal algebras were introduced in \cite{Kac1996}
as a tool for the theory of vertex algebras in
mathematical physics and reprsentation theory.
A keystone in the construction of a vertex algebra is the Dong Lemma stating that,
given a system of pairwise mutually local formal distributions over a Lie algebra,
all conformal $n$-products of these distributions are also
mutually local.
The Dong Lemma is also essential for constructing free (associative and Lie) conformal algebras
since it is
enough to fix locality on the generators to define a universal structure \cite{Roit99}.

Associative and Lie conformal algebras, as well as Jordan conformal algebras
are well studied in a series of papers (see, e.g.,
\cite{DK1998, FK2002, KacCant_JSuper, Kol2006Adv, Zelm2000}).
From the categorial point of view,
if a class $\mathfrak M$ of ``ordinary'' algebras over a field $\Bbbk $
is presented by
morphisms of operads
$\mathcal O_{\mathfrak M}\to \mathrm{Vec}_\Bbbk $,
where $\mathcal O_{\mathfrak M}$ is the operad corresponding to the class $\mathfrak{M}$
and $\mathrm{Vec}_\Bbbk $ is the multi-category of linear spaces over $\Bbbk $, then
$\mathfrak M$-conformal algebras are presented by
morphisms $\mathcal O_{\mathfrak M}\to \Bbbk [\partial]\text{-mod}$,
where $\Bbbk [\partial]\text{-mod}$ is the multi-category (pseudo-tensor category)
of modules over the polynomial Hopf algebra (see \cite{BDK2001} for details).

In this way, other varieties of conformal algebras appear naturally, and the study
of their structure and relations between different varieties is an interesting
algebraic problem. The class of Poisson conformal algebras was studied in \cite{KolIJAC2021},
right-symmetric (pre-Lie) conformal algebras were introduced in \cite{HongLi2015}
along with Novikov conformal algebras.
The latter class turns to be closely related with Novikov--Poisson algebras,
and the study of Novikov conformal algebras helps solving problems
in this area \cite{KolNest2023}.
Dendriform (pre-associative) conformal algebras appeared in \cite{HongBai2021, Yuan2022},
they are related to deformation theory problems for conformal algebras.

Recall that the class of right-symmetric algebras consists of all pairs $(V,\circ )$,
where $V$ is a linear space and $\circ $ is a bilinear operation on $V$,
$(x,y)\mapsto x\circ y$,
such that
\begin{equation}\label{eq:RSym}
(x\circ y)\circ z -  x\circ (y\circ z )
 = (x\circ z)\circ y -  x\circ (z\circ y )
\end{equation}
for all $x,y,z\in V$.
If, in addition, the identity
\begin{equation}\label{eq:LCom}
 x\circ (y\circ z) = y\circ (x\circ z)
\end{equation}
holds on $V$ then $(V,\circ )$ is said to be a Novikov algebra.
This notion emerged in \cite{GD1979} and \cite{BalNov},
where the identities \eqref{eq:RSym}, \eqref{eq:LCom} (or their opposite versions)
were used to describe relations on the coordinates of a rank 3 tensor
appeared in different studies in functional analysis and partial differential equations.

For example, if $A$ is a commutative (and associative) algebra with a derivation $d:A\to A$
then the same space $A$ equipped with the new operation $(x,y)\mapsto x\circ y = d(x)y$
is a Novikov algebra denoted $A^{(d)}$.
It was shown in \cite{DzhLofwall2002} and \cite{BCZ2017} that
for every Novikov algebra $V$ there exists a commutative algebra $A$
with a derivation $d$
such that $V$ is isomorphic to a subalgebra of $A^{(d)}$.
A similar statement was derived in \cite{KolNest2023} for quadratic Novikov conformal algebras,
those related with Novikov--Poisson algebras.
In order to study this embedding problem more precisely and generally,
the notion of a free Novikov conformal algebra is needed. To that end, an analogue
of the Dong Lemma for formal distributions over Novikov algebras is crucially important.

The purpose of this paper is to establish an analogue of the Dong Lemma
for Novikov algebras and show that this statement does not hold
for pre-Lie and pre-associative algebras.

Throughout the paper,
$\Bbbk $ is a field of characteristic zero and $\mathbb Z_+$ stands for the set of
non-negative integers.

\section{Formal distributions, locality, and the OPE formula}

Let $A$ be an algebra, i.e., a linear space over a field $\Bbbk $,
$\mathrm{char}\,\Bbbk =0$,
equipped with a bilinear operation (multiplication)
$\mu : A\times A \to A$, $\mu(a,b)=ab$,
for $a,b\in A$.
We do not assume the operation $\mu $ is associative or commutative.

A formal distribution over $A$ is a two-side infinite formal power series
\begin{equation}\label{eq:GenericDistr}
 a(z) = \sum\limits_{n\in \mathbb Z} a_n z^{-n-1}, \quad a_n\in A,
\end{equation}
where $z$ is a formal variable. The space of all formal distributions over $A$
is denoted $A[[z,z^{-1}]]$. In contrast to ordinary power series $A[[z]]$
or Lawrent series over $A$, it is in general impossible to multiply
formal distributions due to potentially infinite sums emerging at coefficients.
However, given two formal distributions $a(z), b(z)\in A[[z,z^{-1}]]$ as above,
one may easily define the product
\[
 a(w)b(z) = \sum\limits_{n,m\in \mathbb Z} a_nb_m w^{-n-1}z^{-m-1} \in A[[w,w^{-1},z,z^{-1}]].
\]

A pair $(a(z),b(z))$ of formal distributions over an algebra $A$ is said to be {\em local}
if there exists $N\in \mathbb Z_+$ such that
\begin{equation}\label{eq:LocalityForDist}
 a(w)b(z) (w-z)^N = 0.
\end{equation}
The minimal such $N$ is said to be the {\em locality value} of the pair $(a(z), b(z))$.

Two formal distributions $a(z), b(z)\in A[[z,z^{-1}]]$ are said to be mutually local
if $(a(z),b(z))$ and $(b(z),a(z))$ are local pairs.

\begin{remark}
Let $\partial $ be the formal derivation
of formal distributions, i.e.,
if $a(z)$ is given by \eqref{eq:GenericDistr} then
\[
 (\partial a)(z) = \sum\limits_{n\in \mathbb Z}(-n-1) a_n z^{-n-2} =
 -n\sum\limits_{n\in \mathbb Z} a_{n-1} z^{-n-1}.
\]
If $a(w)b(z)(w-z)^N=0$ then $(\partial a)(w)b(z) (w-z)^{N+1} = 0$
as well as
$a(w)(\partial b)(z) (w-z)^{N+1} = 0$.
\end{remark}

\begin{example}\label{exmp:Weyl}
Let $A$ be the associative algebra
generated by $q$, $t$, $t^{-1}$
such that $qt-tq=1$. (This is the algebra of differential
operators on the algebra of Lawrent polynomials $\Bbbk [t,t^{-1}]$.)
Then the formal distributions
\[
 q^{(m)}(z) = \sum\limits_{n\in \mathbb Z} \dfrac{1}{m!} t^nq^m z^{-n-1} \in A[[z,z^{-1}]],
 \quad m\in \mathbb Z_+,
\]
are pairwise mutually local and the locality value for $q^{(m)}(z), q^{(k)}(z)$
is equal to
$N = m+1$.
\end{example}

\begin{example}\label{exmp:NovikovLie}
Let $V$ be a Novikov algebra with an operation
$(a,b)\mapsto a\circ b$, $a,b\in V$.
Then the space of Lawrent polynomials $V[t,t^{-1}]$
equipped with a new operation
\[
 [at^n,bt^m] = (n(b\circ a)-m(a\circ b))t^{m+n-1}, \quad a,b\in V,\ n,m\in \mathbb Z,
\]
is a Lie algebra, and the series
of the form
\[
 a(z) = \sum\limits_{n\in \mathbb Z} at^n z^{-n-1}, \quad a\in V,
\]
are pairwise local with $N\le 2$.
\end{example}

In particular, for 1-dimensional commutative (hence, Novikov)
algebra $V=\Bbbk $ such a series generates the Virasoro conformal Lie algebra
\cite{DK1998}.

It follows by direct computations
\cite{KacForDist} that
a pair of formal distributions $(a(z),b(z))$ over an algebra $A$
is local if and only if
there exists $N\in \mathbb Z_+$ such that
\begin{equation}\label{eq:LocalityCoeff}
 \sum\limits_{s\in \mathbb Z_+} (-1)^s \binom{N}{s} a_{n-s}b_{m+s}  = 0
\end{equation}
for all $n,m\in \mathbb Z$.

Given a local pair of formal distributions $a(z),b(z)\in A[[z,z^{-1}]]$,
their {\em $\lambda $-product}
is a polynomial in $\lambda $ with coefficients
in $A[[z,z^{-1}]] $ defined as follows:
\[
 (a(z)\oo\lambda b(z) ) = \sum\limits_{n\in \mathbb Z_+} \dfrac{\lambda^n}{n!} c^{(n)} (z),
\]
where
\begin{equation}\label{eq:n-prod}
 c^{(n)}(z) = \Res\limits_{w=0} a(w)b(z)(w-z)^n, \quad n\in \mathbb Z_+.
\end{equation}
Here $\Res\limits_{w=0}$ denotes the residue of a formal distribution, i.e.,
the coefficient at $w^{-1}$.
The coefficients $c^{(n)}(z)$ are called {\em $n$-products}
of $a(z)$ and $b(z)$, denoted
$(a\oo{n} b)(z)$.
It is easy to compute (see also \cite{KacForDist, Roit99}) that
\[
 c^{(n)}(z) = \sum\limits_{m\in \mathbb Z} (a\oo{n} b)_m z^{-m-1},
 \quad
 (a\oo{n} b)_m = \sum\limits_{s\in \mathbb Z_+} (-1)^s \binom{n}{s} a_{n-s}b_{m+s}.
\]

The importance of $n$-products becomes clear from the following observation.
Suppose we are given a local pair $(a(z),b(z))$  of formal distributions.
It follows from \eqref{eq:LocalityCoeff} that every element in $A$ of the form
$a_kb_m$, $k,m\in \mathbb Z$, may be expressed as a linear combination
of
\[
 a_0b_{k+m}, a_1b_{k+m-1}, \dots, a_{N-1}b_{k+m-N+1}.
\]
The explicit form of such an expression is known as the operator product
expansion (OPE) formula: 
\[
 a(w)b(z) = \sum\limits_{s=0}^{N-1} \dfrac{1}{s!} (a\oo s b)(z)
  \dfrac{\partial^s}{\partial z^s} \delta(w-z),
\]
i.e., the product of two distributions in different variables
expands into a finite sum with respect to derivatives
of the formal delta-function
\[
\delta(w-z) = \sum\limits_{r+s=-1}w^rz^s \in \Bbbk [[w,w^{-1},z,z^{-1}]].
\]

\begin{theorem}[Dong Lemma, see, e.g., \cite{FrenBZvi}]\label{thm:DongLemma}
 Let $A$ be a Lie algebra, and let
$a(z),b(z),c(z) \in A[[z,z^{-1}]]$
be pairwise mutually local distributions.
Then for every $n\in \mathbb Z_+$
the pairs of formal distributions
$(a\oo n b)(z), c(z)$ and $a(z), (b\oo{n} c)(z)$
are local.
\end{theorem}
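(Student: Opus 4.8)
The plan is to derive everything from one identity, using skew-symmetry to cut the work in half. For any two formal distributions over a Lie algebra one has $g(z)f(w)=-f(w)g(z)$ in $A[[w,w^{-1},z,z^{-1}]]$ (compare the coefficients of $w^{-n-1}z^{-m-1}$), so a pair is local exactly when it is local in the reversed order and, after renaming the variables, $c(w)(a\oo n b)(z)(w-z)^M=0$ is equivalent to $(a\oo n b)(w)c(z)(w-z)^M=0$; similarly the locality of $a(z)$ and $(b\oo n c)(z)$ is equivalent to $(b\oo n c)(w)a(z)(w-z)^M=0$. Hence the whole theorem follows from the claim: \emph{if $a(z),b(z),c(z)$ are pairwise local with locality values $r,s,t$ for the pairs $(a,b),(a,c),(b,c)$, then $(a\oo n b)(w)c(z)(w-z)^{r+s+t}=0$ for every $n\in\mathbb Z_+$} — one applies it to the triple $(a,b,c)$ for the first assertion and to $(b,c,a)$ for the second.

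To prove the claim I would work with the three-variable distribution $P(x,w,z)=(a(x)b(w))c(z)$, whose $x^{-i-1}w^{-j-1}z^{-l-1}$-coefficient is $[[a_i,b_j],c_l]$. Multiplying $a(x)b(w)(x-w)^r=0$ by $c(z)$ on the right and using bilinearity of the bracket gives $P(x-w)^r=0$. Next, the Jacobi identity in the form $[[a,b],c]=[a,[b,c]]-[b,[a,c]]$ yields the distribution identity $P=a(x)(b(w)c(z))-b(w)(a(x)c(z))$; the same "apply a fixed linear map to a vanishing relation" reasoning applied to $b(w)c(z)(w-z)^t=0$ and to $a(x)c(z)(x-z)^s=0$ gives $a(x)(b(w)c(z))(w-z)^t=0$ and $b(w)(a(x)c(z))(x-z)^s=0$. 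Consequently $P(w-z)^t=-b(w)(a(x)c(z))(w-z)^t$, and multiplying this by the polynomial $(x-z)^s$ annihilates the right-hand side, so $P(x-z)^s(w-z)^t=0$.

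The last step is combinatorial. Since $(a\oo n b)(w)=\Res_{x=0}a(x)b(w)(x-w)^n$, pulling $c(z)$ and $(w-z)^M$ under the residue gives $(a\oo n b)(w)c(z)(w-z)^M=\Res_{x=0}P(x,w,z)(x-w)^n(w-z)^M$. Peel off a factor $(w-z)^t$ and expand $(x-w)^n(w-z)^{M-t}$ using $w-z=(x-z)-(x-w)$; every monomial produced has the shape $\pm(x-w)^{n+M-t-k}(x-z)^{k}(w-z)^{t}$ with $0\le k\le M-t$, and $P$ annihilates such a monomial whenever $n+M-t-k\ge r$ (via $P(x-w)^r=0$) or $k\ge s$ (via $P(x-z)^s(w-z)^t=0$). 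As soon as $M\ge r+s+t$ these two ranges of $k$ cover all of $\{0,1,\dots,M-t\}$, so the residue vanishes, which proves the claim and hence the theorem.

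The Taylor-expansion bookkeeping of the last paragraph is routine and is not the real obstacle. The crux — and precisely the feature that fails for varieties where no Jacobi-type rewriting is available — is that non-associativity forces one to pass among the three bracketings of $a(x)b(w)c(z)$ through the Jacobi identity, each bracketing supplying locality with respect to a different pair of variables, and one must verify that these three "directional" localities together generate enough of the polynomial ideal $\langle(x-w)^r,\ (x-z)^s(w-z)^t\rangle$ to absorb a high power of $w-z$ after taking the residue in $x$.
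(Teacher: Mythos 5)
The paper states this theorem without proof, citing the literature, so there is no in-paper argument to compare against; your write-up is a correct and complete rendition of the standard proof (the Jacobi identity yields $P(x-w)^r=0$ and $P(x-z)^s(w-z)^t=0$, and the substitution $w-z=(x-z)-(x-w)$ under $\Res_{x=0}$ finishes it), and the reduction to a single one-sided claim via anticommutativity and variable renaming is legitimate. Your locality bound $r+s+t$ is slightly weaker than the sharp $r+s+t-n$ quoted in the paper's final remark, but the theorem only asserts existence of some $N$, so this is fine.
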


For the case of formal distributions over an associative algebra $A$,
the Dong Lemma is also true (see, e.g., \cite{Roit99}).
The aim of this paper is to sudy related varieties: pre-Lie (right-symmetric),
Novikov, and pre-associative (dendriform) algebras.

The latter class of algebras is defined via two bilinear products, so
the locality statement (similar to that of Theorem~\ref{thm:DongLemma})
should be checked for eight pairs of formal distributions.
We will also mention the cases of Leibniz algebras
and associative dialgebras.

\section{Locality of formal distributions over pre-associative algebras}

The notion of a pre-associative algebra (under the term ``dendriform algebra'')
was proposed by J.-L. Loday in \cite{Loday2001}. The original definition
involves two bilinear operations
$(x,y)\mapsto x\prec y $ and $(x,y)\mapsto x\succ y$
satisfying
some identities of degree~3. These identities may be obtained
from associativity by means of a general procedure
(dendriform splitting, \cite{BCGN2013}).
Note that the same procedure transforms Lie algebra identities into pre-Lie ones.
This is why we prefer the term ``pre-associative'' rather than ``dendriform''.

It is convenient to define pre-associative algebras in terms of other
two bilinear operations.

\begin{definition}
A pre-associative algebra is a linear space $A$
equipped with two bilinear operations
  $(x,y)\mapsto x*y $
and
$(x,y)\mapsto xy$
satisfying the following axioms:
\begin{gather}
 (x*y)*z = x*(y*z), \label{eq:DendAss} \\
 (x*y)z = x(yz), \label{eq:DendL-star} \\
 x(y*z) = (xy)*z + x(yz)-(xy)z. \label{eq:DendR-star}
\end{gather}
\end{definition}

These new operations have the following relation to the original ones:
 $x*y = x\prec y + x\succ y$, and $xy = x\succ y$.

Suppose $A$ is a pre-associative algebra and
$a(z), b(z) \in A[[z,z^{-1}]]$ are two formal distributions over~$A$.
Since $A$ has two operations, the definition of locality for the pair
$(a(z),b(z))$ includes two conditions. Namely, if there exists $N\in \mathbb Z_+$
such that $ a(w)*b(z)(w-z)^N = 0$
then we say that $(a(z),b(z))$ is a $*$-local pair;
if $a(w)b(z)(w-z)^N=0$ for some $N\in \mathbb Z_+$ then
the pair $(a(z),b(z))$ is said to be  $\succ $-local.

Given $n\in \mathbb Z_+$, denote
\[
 (a\oo n b)(z) = \Res\limits_{w=0} a(w)b(z)(w-z)^n,
 \quad
 (a\soo n b)(z) = \Res\limits_{w=0} a(w)*b(z)(w-z)^n,
\]
as for ``ordinary'' algebras.

The purpose of this section is to show that the Dong Lemma does not hold
``in full'' for pre-associative algebras. First, let us prove
positive parts of the statement.

\begin{theorem}\label{thm:DongLemma_preAs}
Let $A$ be a pre-associative algebra. Suppose
$a(z),b(z),c(z) \in A[[z,z^{-1}]]$
are pairwise mutually $*$-local and $\succ $-local.
Then for every $n\in \mathbb Z_+$
\begin{itemize}
\item
the pair $a(z),(b\oo n c)(z)$ is $\succ $-local;
\item
the pair $a(z), (b\soo n c)(z)$ is $*$-local and $\succ $-local;
\item
the pair $(a\soo n b)(z), c(z)$ is $*$-local and $\succ $-local.
\end{itemize}
\end{theorem}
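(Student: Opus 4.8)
The plan is to treat the $*$-locality claims and the $\succ$-locality claims by different means. Since the product $*$ is associative by~\eqref{eq:DendAss}, $(A,*)$ is an ordinary associative algebra over which $a(z),b(z),c(z)$ are pairwise mutually local; hence the classical Dong Lemma for associative algebras (see, e.g.,~\cite{Roit99}) immediately gives that the pairs $(a\soo{n}b)(z),c(z)$ and $a(z),(b\soo{n}c)(z)$ are $*$-local. What remains, and where the content of the theorem sits, are the three $\succ$-locality assertions; these I would prove by direct calculation with formal distributions, using only the mixed axioms~\eqref{eq:DendL-star},~\eqref{eq:DendR-star} and the bilinearity of the two operations.

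Two simple observations carry the argument. \emph{First:} a scalar polynomial in $u,z$ (in particular a power of $u-z$) commutes past the linear operator $x\mapsto p(w)\succ x$ into its argument, so if $G(u,z)$ is either $b(u)\succ c(z)$ or $b(u)*c(z)$, then a high enough power of $u-z$ kills $p(w)\succ G(u,z)$ because it already kills $G$; here one uses the $\succ$- or $*$-locality of the pair $(b,c)$. \emph{Second:} a high enough power of $w-u$ kills $a(w)\succ(b(u)\succ c(z))$, since by~\eqref{eq:DendL-star} this equals $(a(w)*b(u))\succ c(z)$ and $(w-u)^{N}$, with $N$ the $*$-locality value of $(a,b)$, annihilates $a(w)*b(u)$ and hence the whole product; similarly a high power of $w-u$ kills $a(w)\succ(b(u)*c(z))$ once it is expanded by~\eqref{eq:DendR-star} as $(a(w)\succ b(u))*c(z)+a(w)\succ(b(u)\succ c(z))-(a(w)\succ b(u))\succ c(z)$, because each summand has $a(w)\succ b(u)$ or $a(w)*b(u)$ as a subproduct.

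With this the pair $(a\soo{n}b)(z),c(z)$ is immediate: by~\eqref{eq:DendL-star}, $(a\soo{n}b)(w)\succ c(z)=\Res_{u}\big(a(u)\succ(b(w)\succ c(z))\big)(u-w)^{n}$, and as $(w-z)^{N}$ (with $N$ the $\succ$-locality value of $(b,c)$) involves no $u$, it passes inside the inner factor $b(w)\succ c(z)$, which it annihilates. For the two remaining $\succ$-assertions I would isolate a short Dong-type lemma: if a three-variable distribution $F(w,u,z)$ is annihilated by $(u-z)^{p}$ and by $(w-u)^{q}$, then for every $n\in\mathbb Z_{+}$ the distribution $\Res_{u}\big(F(w,u,z)(u-z)^{n}\big)$ is annihilated by $(w-z)^{p+q}$; this follows by writing $(w-z)^{p+q}=\big((w-u)+(u-z)\big)^{p+q}$, expanding, and noting that in each resulting monomial either $w-u$ appears to a power at least $q$ or $u-z$ appears to a power at least $p$. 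Applying this to $F=a(w)\succ(b(u)\succ c(z))$, for which $\Res_{u}\big(F(u-z)^{n}\big)=a(w)\succ(b\oo{n}c)(z)$, gives the $\succ$-locality of $a(z),(b\oo{n}c)(z)$; applying it to $F=a(w)\succ(b(u)*c(z))$, for which $\Res_{u}\big(F(u-z)^{n}\big)=a(w)\succ(b\soo{n}c)(z)$, gives the $\succ$-locality of $a(z),(b\soo{n}c)(z)$. The exponents $p,q$ come from the two observations above, and they yield explicit bounds on the locality values.

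The one genuinely delicate point is the distribution $a(w)\succ(b(u)*c(z))$ behind the middle assertion: unlike $a(w)\succ(b(u)\succ c(z))$ it is not turned by a single axiom into a product in which $a$ and $b$ occupy one slot, so one cannot factor out a power of $w-u$ directly; the remedy is exactly the splitting by~\eqref{eq:DendR-star} used in the second observation, after which each of the three summands is annihilated by a power of $w-u$ exceeding both the $*$- and the $\succ$-locality values of $(a,b)$. The rest is bookkeeping: tracking that a polynomial in $u,z$ may be moved through $a(w)\succ(-)$ into the second argument, while a power of $w-u$ may be absorbed into a subproduct $a(w)*b(u)$ or $a(w)\succ b(u)$. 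No identity beyond~\eqref{eq:DendAss}--\eqref{eq:DendR-star} and bilinearity enters, and mutual locality of the three pairs is not even needed for the $\succ$-part---one direction of locality for each pair suffices.
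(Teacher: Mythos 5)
Your proof is correct, but it takes a genuinely different route from the paper. The paper does not argue directly with the axioms: it embeds the pre-associative algebra $A$ into an associative algebra $\hat A=A\dotplus\bar A$ (the construction from \cite{GubKol2013}, with $u\cdot v=u*v$, $u\cdot\bar v=\overline{uv}$, $\bar u\cdot v=\overline{u*v}-\overline{uv}$, $\bar u\cdot\bar v=0$), observes that $a(z),b(z),c(z),\bar a(z),\bar b(z),\bar c(z)$ are pairwise mutually local over $\hat A$, and then reads off all five claims from the associative Dong Lemma applied in $\hat A$, using identifications such as $(b\oo n\bar c)(z)=\overline{(b\oo n c)}(z)$ and $\overline{(b\soo n c)}(z)=(\bar b\oo n c)(z)+(b\oo n\bar c)(z)$. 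You instead get the two $*$-claims from the associative Dong Lemma for $(A,*)$ alone and prove the three $\succ$-claims by hand: the identity \eqref{eq:DendL-star} turns $(a\soo n b)(w)\succ c(z)$ into $\Res_{u}\,a(u)\succ(b(w)\succ c(z))(u-w)^n$, which is killed outright by $(w-z)^{N_\succ(b,c)}$, and the other two cases follow from your three-variable residue lemma applied to $F=a(w)\succ(b(u)\succ c(z))=(a(w)*b(u))\succ c(z)$ and to $F=a(w)\succ(b(u)*c(z))$ split via \eqref{eq:DendR-star}; I checked that each $F$ is indeed annihilated by suitable powers of $u-z$ and of $w-u$, and the binomial expansion of $(w-z)^{p+q}$ closes the argument. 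What your approach buys is independence from the embedding theorem of \cite{GubKol2013} and sharper, explicit locality bounds (e.g.\ $N_\succ\bigl((a\soo n b),c\bigr)\le N_\succ(b,c)$, and additivity of the two exponents in the other cases), plus the observation that only one-sided locality is needed for the $\succ$-part; what the paper's approach buys is brevity and a uniform mechanism that also organizes the negative results in the rest of the section. Both are valid.
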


\begin{proof}
All these statements follow from the embedding of a pre-associative (dendriform) algebra $A$
into an associative (Rota--Baxter) algebra $\hat A$
described in \cite{GubKol2013}.
Let us recall the essential part of the construction.

As a linear space, $\hat A$ is a direct sum of two copies of the space~$A$:
\[
 \hat A = A \dotplus \bar A.
\]
A multiplication $(x,y)\mapsto x\cdot y$, for $x,y\in \hat A$, is defined by
\[
\begin{gathered}
 u\cdot v = u* v, \quad u,v\in A; \\
 \bar u\cdot v = \overline{u* v} - \overline{uv}, \quad \bar u\in \bar A,
 \ v\in A; \\
 u\cdot \bar v = \overline{uv}, \quad u\in A,\ \bar v \in \bar A; \\
 \bar u\cdot \bar v = 0,\quad \bar u,\bar v\in \bar A.
\end{gathered}
\]
Then $\hat A$ with the operation $\cdot $ is an associative algebra.
For $x\in \{a,b,c\}$ consider $x(z)$ and $\bar x(z)$ as formal distributions
over $\hat A$:
if $x(z)=\sum\limits_{n\in \mathbb Z }x_n z^{-n-1}$, $x_n\in A$,
then
\[
\bar x(z)=\sum\limits_{n\in \mathbb Z }\bar x_n z^{-n-1} , \quad \bar x_n\in \bar A\subset \hat A.
\]

By the hypothesis,
$a(z)$, $b(z)$, $c(z)$, $\bar a(z)$, $\bar b(z)$, $\bar c(z)$
are pairwise mutually local formal distributions from $\hat A[[z,z^{-1}]]$.

In particular, consider $a(z)$, $b(z)$, and $\bar c(z)$. By the
Dong Lemma for associative algebras, the pair $(a(z), \overline{(b\oo n c)}(z))$
of formal distributions over $\hat A$ is local.
Since
$(b\oo n \bar c)(z) = \overline{(b\oo n c)}(z)$ by the definition of operation in $\hat A$,
the pair $(a(z), (b\oo n c)(z))$ is $\succ $-local.

Similarly, since $\overline{(b\soo n c)}(z)=(\bar b\oo n c)(z) + (b\oo n \bar c)(z)$,
the pair $(a(z), (b\soo n c)(z))$ is $\succ $-local in $A[[z,z^{-1}]]$.
The $*$-locality of the same pair is an immediate corollary of the Dong Lemma for associative
algebras.
The remaining statements are proved in a similar way.
\end{proof}

Consider the remaining locality statements on formal distributions over
a pre-associative algebra:
$*$-locality for the pairs
$(a(z), (b\oo n c)(z))$, $((a\oo n b)(z), c(z))$,
and $\succ $-locality for $((a\oo n b)(z),c(z))$.
Let us show that these statements do not hold in general.
We will construct an appropriate pre-associative algebra~$A$
by means of generators and defining relations.

\begin{proposition}\label{prop:Dendr_nonLoc}
 There exists a pre-associative algebra $A$ and a formal distribution
 $a(z)\in A[[z,z^{-1}]]$ such that the pair $(a(z),a(z))$ is both
 $*$-local and $\succ $-local, but $(a(z), (a\oo 0 a)(z))$ is not $*$-local
 and
 $((a\oo 0 a)(z), a(z))$ is neither $*$-local, nor $\succ $-local.
\end{proposition}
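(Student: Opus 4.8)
The plan is to construct $A$ by generators and relations in the variety of pre-associative algebras, so that the ``bad'' locality values are forced to be infinite while the generating distribution remains local. Take a single generator $a$ and, on the formal distribution
\[
 a(z) = \sum_{n\in\mathbb Z} a_n z^{-n-1},
\]
impose $*$-locality and $\succ$-locality of $(a(z),a(z))$ of order $N=2$; by the coefficient criterion \eqref{eq:LocalityCoeff} this amounts to declaring, for all $m,n\in\mathbb Z$,
\begin{gather*}
 a_n*a_m - 2\,a_{n-1}*a_{m+1} + a_{n-2}*a_{m+2} = 0,\\
 a_n a_m - 2\,a_{n-1}a_{m+1} + a_{n-2}a_{m+2} = 0.
\end{gather*}
Concretely I would let $A$ be the pre-associative algebra presented by the set of generators $\{a_n : n\in\mathbb Z\}$ subject to these relations together with the defining identities \eqref{eq:DendAss}--\eqref{eq:DendR-star}; then $a(z)$ is a local distribution over $A$ essentially by construction. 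The computation of $(a\oo 0 a)(z)$ is immediate from the formula for $n$-products: $(a\oo 0 a)_m = \sum_{s\ge 0}(-1)^s\binom 0s a_{-s}a_{m+s} = a_0 a_m$, so $(a\oo 0 a)(z) = \sum_m (a_0 a_m) z^{-m-1}$.

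The heart of the argument is then to show that in this presented algebra the relations
\[
 \sum_{s\ge 0}(-1)^s\binom Ns (a_0 a_{n-s})*a_{m+s} = 0,\quad
 \sum_{s\ge 0}(-1)^s\binom Ns a_{n-s}*(a_0 a_{m+s}) = 0,\quad
 \sum_{s\ge 0}(-1)^s\binom Ns (a_0 a_{n-s}) a_{m+s} = 0
\]
fail for every $N\in\mathbb Z_+$. The natural tool for this is the embedding of $A$ into an associative Rota--Baxter algebra $\hat A = A\dotplus\bar A$ from \cite{GubKol2013}, exactly as in the proof of Theorem~\ref{thm:DongLemma_preAs}: locality relations in $A$ translate into locality relations among $a(z),\bar a(z)$ in $\hat A$, and one can instead work in a concrete associative model. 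I would build such a model from the Weyl-type construction of Example~\ref{exmp:Weyl} (or a suitable quotient/variant of it), choosing the associative algebra $B$ with a Rota--Baxter operator $R$ so that the relevant mixed products $(a_0a_{n})*a_{m}$, $a_n*(a_0a_m)$, $(a_0a_n)a_m$ are provably nonzero and the antisymmetrized $N$-fold combinations never vanish — typically because the ``$a_0$'' factor shifts indices in a way incompatible with a finite-order locality cancellation (the locality value of the $n$-product of two order-$2$ local distributions would have to be $\ge 3$, and the pre-associative obstruction pushes it to $\infty$).

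The main obstacle is the last step: exhibiting a genuine pre-associative (equivalently, Rota--Baxter associative) algebra in which these three families of identities provably fail simultaneously, while the original order-$2$ locality of $(a(z),a(z))$ is maintained. Two routes are available: (i) a direct diamond-lemma / Gröbner--Shirshov basis computation in the free pre-associative algebra on $\{a_n\}$ modulo the locality relations, showing the offending elements are nonzero normal forms; or (ii) constructing an explicit representation into a differential or Rota--Baxter algebra of operators on Laurent polynomials. I would favor route (ii) for transparency: pick $\hat A$ acting on $\Bbbk[t,t^{-1}]\otimes(\text{finite-rank piece})$, define $a_n = t^n\otimes e$ with a nilpotent ``$e$'' encoding the non-associativity, verify the order-$2$ locality by the two displayed quadratic relations, and then exhibit one explicit pair $(n,m)$ for each of the three statements on which the order-$N$ combination is a nonzero operator for all $N$. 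The bookkeeping of $\binom Ns$ coefficients against the index shifts is the routine-but-delicate part; the conceptual content is entirely in choosing $e$ (equivalently, the Rota--Baxter structure) correctly.
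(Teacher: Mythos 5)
Your setup is sound and matches the paper's strategy in outline: present $A$ by generators $\{a_n\}$ and locality relations in the free pre-associative algebra, note that $(a\oo 0 a)_m=a_0a_m$, and reduce the three negative claims to showing that the corresponding alternating sums are nonzero in the quotient. (The paper uses the stronger relations $a_n*a_m=a_0*a_{n+m}$, $a_na_m=a_0a_{n+m}$ for $n\ne 0$, giving locality value $N=1$; your order-$2$ relations are admissible but make the quotient larger and the subsequent analysis harder, with no compensating benefit.) The problem is that the entire mathematical content of the proposition lies in the step you defer: proving that the elements
\[
\sum_{s}(-1)^s\binom{N}{s}(a_0a_{n-s})a_{m+s},\qquad
\sum_{s}(-1)^s\binom{N}{s}(a_0a_{n-s})*a_{m+s},\qquad
\sum_{s}(-1)^s\binom{N}{s}a_{n-s}*(a_0a_{m+s})
\]
do not lie in the ideal of relations for any $N$. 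You name two candidate routes but execute neither, so as written this is a plan, not a proof.

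Of your two routes, (i) is essentially what the paper does, and it is quite concrete: since the relations are homogeneous of degree $2$, the degree-$3$ component $I_3$ of the ideal is spanned by the products of the relators with single generators; using the known linear basis of the free pre-associative algebra (monomials $u_1*\cdots*u_k$ with each $u_i$ a magmatic word in the second operation) one lists this spanning set explicitly, reduces it, and checks that the monomials $(a_0a_{n-s})a_{m+s}$ and $a_{n-s}*(a_0a_{m+s})$ occurring in the sums above are linearly independent modulo $I_3$; the $*$-non-locality of $((a\oo 0 a)(z),a(z))$ then follows because the difference of the second and first sums does lie in $I$. Your preferred route (ii) is the weaker one: the passage to the associative envelope $\hat A$ proves exactly the \emph{positive} statements (Theorem~\ref{thm:DongLemma_preAs}), and a concrete operator model with a ``nilpotent $e$'' is not constructed --- you do not specify how the two operations $*$ and $\succ$ are realized on $\Bbbk[t,t^{-1}]\otimes(\text{finite-rank piece})$, nor why the order-$2$ locality of $(a(z),a(z))$ would coexist with the failure of all three derived localities for every $N$. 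Until such a model is exhibited and the non-vanishing is verified, the proof is incomplete.
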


\begin{proof}
Let $X=\{a_n \mid n\in \mathbb Z\}$ be a countable set indexed by integers
and let $F(X)$ stand for the free pre-associative algebra generated by~$X$
(we will recall its structure below).
Denote by $I$ the ideal of $F(X)$ generated by
the elements
\[
 a_n*a_m-a_0*a_{n+m}, \quad a_na_m - a_0a_{n+m},
\]
for all $n,m\in \mathbb Z$, $n\ne 0$. Consider the quotient
pre-associative algebra
$A=F(X)/I$.
Then the formal distribution
\[
 a(z) = \sum\limits_{n\in \mathbb Z}  (a_n+I) z^{-n-1} \in A[[z,z^{-1}]]
\]
has the following properties:
\[
 (a(w)*a(z))(w-z) = 0,\quad (a(w)a(z))(w-z) = 0.
\]
In other words, the pair $(a(z),a(z))$ is both $*$-local and $\succ $-local
with locality level $N=1$.

In order to show that the pair of formal distributions
$((a\oo 0 a)(z), a(z))$ is not $\succ $-local,
 consider the elements
\begin{equation}\label{eq:Loc2-1Dend}
f_{n,m}^{(N)} =  \sum\limits_{s=0}^N (-1)^s \binom{N}{s} (a_0a_{n-s})a_{m+s} \in F(X),
\quad n,m\in \mathbb Z,\ N\in \mathbb Z_+.
\end{equation}
It is enough to prove that for every $N\in \mathbb Z_+$
there exist $n,m \in \mathbb Z$ such that $f_{n,m}^{(N)} \notin I$.

For the latter, we may use the homogenuity of defining relations
which allows us to compute explicitly all polynomials of degree 3 from~$I$.
One may organize the computation better by making use
the Gr\"obner--Shirshov bases (GSB) method
for pre-associative (dendriform) algebras
proposed in \cite{KolComm13}, but it is not necessary
to apply this technique.

It is easy to see that the identities \eqref{eq:DendAss}--\eqref{eq:DendR-star}
allow one to rewrite an arbitrary term in the free pre-associative algebra $F(X)$
generated by a
set $X$ as a linear combination of monomials
\begin{equation}\label{eq:DendFreeBasis}
 u_1*u_2*\dots *u_k, \quad k\ge 1,
\end{equation}
where each $u_i$ is a nonassociative (magmatic) word
in $X$ involving only the second operation $(x,y)\mapsto xy$.
It was shown in \cite{KolComm13} that the monomials \eqref{eq:DendFreeBasis}
are linearly indepedent, i.e., they form a linear basis of $F(X)$.
For the set
\[
S = \{a_na_m - a_0a_{n+m},\, a_n*a_m - a_0*a_{n+m} \mid n,m\in \mathbb Z, n\ne 0 \},
\]
$S\subseteq F(X)$,
consider the space $I_3$ of all elements of degree 3 in the ideal $I$ generated by $S$:
they are spanned by $sa_k$, $a_ks$, $s*a_k$, $a_k*s$, where $s\in S$. Hence,
the space $I_3\subset I$
is spanned by
\begin{gather}
 (a_na_m)a_k - (a_0a_{n+m})a_k, \quad a_k(a_na_m) - a_k(a_0a_{n+m}), \label{eq:MM-1} \\
 (a_na_m)*a_k - (a_0a_{n+m})*a_k, \quad a_k*(a_na_m) - a_k*(a_0a_{n+m}), \label{eq:MS-1} \\
 (a_n*a_m)*a_k - (a_0*a_{n+m})*a_k, \quad a_k*(a_n*a_m) - a_k*(a_0*a_{n+m}), \label{eq:SS-1} \\
 (a_n*a_m - a_0*a_{n+m})a_k = a_n(a_ma_k) - a_0(a_{n+m}a_k), \label{eq:SM-1}\\
 a_k(a_n*a_m - a_0*a_{n+m}) = (a_ka_n)*a_m - (a_k,a_n,a_m) - (a_ka_0)*a_{n+m} + (a_k,a_0,a_{n+m}), \label{eq:MS-2}
\end{gather}
where $n,m,k\in \mathbb Z$, $n\ne 0$.
By means of linear reduction,
\eqref{eq:SM-1} and \eqref{eq:MS-2}
may be replaced with
\begin{gather}
a_n(a_0a_k) - a_0(a_{0}a_{n+k}), \label{eq:SM-1-1}\\
(a_0a_n)*a_m - (a_0a_n)a_m - (a_0a_0)*a_{n+m} + (a_0a_0)a_{n+m}, \label{eq:MS-2-1}
\end{gather}
for $n,m,k\in \mathbb Z$, $n\ne 0$.

Since all monomials of the form $(a_0a_{n-s})a_{m+s}$ that appear in $f_{n,m}^{(N)}$
are linearly independent modulo
\eqref{eq:MM-1}--\eqref{eq:SS-1}, \eqref{eq:SM-1-1}, \eqref{eq:MS-2-1},
we may conclude $f_{n,m}^{(N)}\notin I$, as desired.

In a similar way, if the pair
$((a\oo 0 a)(z), a(z))$ is  $*$-local
then there exists $N\in \mathbb Z_+$ such that
\[
g_{n,m}^{(N)} =  \sum\limits_{s\in \mathbb Z_+} (-1)^s \binom{N}{s} (a_0a_{n-s})*a_{m+s} \in I
\]
for all $n,m\in \mathbb Z$.
This is impossible since
\begin{multline*}
I\ni \sum\limits_{s\in \mathbb Z_+} (-1)^s \binom{N}{s}
  ((a_0a_{n-s})*a_{m+s} - (a_0a_{n-s})a_{m+s} - (a_0a_0)*a_{n+m}
  + (a_0a_0)a_{n+m}) )
=
 g_{n,m}^{(N)} - f_{n,m}^{(N)}
\end{multline*}
for $N>0$
(c.f. \eqref{eq:MS-2-1}), but we know $f_{n,m}^{(N)} \notin I$.

Finally, if the pair $(a(z), (a\oo 0 a)(z))$ is $*$-local then
there exists $N\in \mathbb Z_+$ such that
\[
h_{n,m}^{(N)} =  \sum\limits_{s\in \mathbb Z_+} (-1)^s \binom{N}{s} a_{n-s}*(a_0a_{m+s}) \in I
\]
for all $n,m\in \mathbb Z$.
The only option is to present $h_{n,m}^{(N)}$ as a linear combination of
elements \eqref{eq:MS-1}, but this is obviously impossible.
\end{proof}

\section{Pre-Lie algebras and the Dong Lemma for Novikov algebras}

Given a pre-associative algebra $A$ as above, its ``commutator'' algebra
$A^{(-)}$ is a right-symmetric (pre-Lie) algebra: this is the same space~$A$
equipped with bilinear operation
\[
 x\circ y  = x*y - xy -yx, \quad x,y\in A.
\]
In terms of ``original'' operations $\prec $ and $\succ$ we have
$x\circ y =x\prec y -y\succ x$.
It was proved in \cite{Gubarev} that
every right-symmetric algebra $V$ may be embedded into its universal enveloping
pre-associative algebra $U(V)$, and an analogue of the Poincar\'e--Birkhoff--Witt
Theorem holds (see also \cite{DotsTamar}).

\begin{theorem}\label{thm:DongPreLie}
Let $V$ be a right-symmetric (pre-Lie) algebra with an operation
$(x,y)\mapsto x\circ y$,
$x,y\in V$. Assume
$a(z),b(z),c(z)\in V[[z,z^{-1}]]$
are pairwise mutually local formal distributions over~$V$.
Then $((a\oo n b)(z), c(z))$ is a local pair for every $n\in \mathbb Z_+$,
but
$(a(z), (b\oo n c)(z))$ is not necessarily local.
\end{theorem}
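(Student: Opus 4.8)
The statement splits into a positive part — the locality of $\big((a\oo n b)(z),c(z)\big)$ — and a negative part, and I would handle them separately.

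\emph{Positive part.} The approach is to argue directly with formal distributions, but to feed in from the outset \emph{all three} pairwise localities of the triple; using the right‑symmetry identity alone one falls into a circular recursion (explained below). First I would set $D(x,w,z)=(a(x)\circ b(w))\circ c(z)$ and rewrite the outer multiplication by \eqref{eq:RSym}, i.e.\ $(u\circ b(w))\circ c(z)=u\circ(b(w)\circ c(z))+(u\circ c(z))\circ b(w)-u\circ(c(z)\circ b(w))$, getting $D=D'+D''$ with $D'=a(x)\circ\big(b(w)\circ c(z)-c(z)\circ b(w)\big)$ and $D''=(a(x)\circ c(z))\circ b(w)$. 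Then I would record three annihilations: $D'(w-z)^{N_0}=0$, where $N_0$ is the larger of the two locality values of the mutually local pair $\{b(z),c(z)\}$ (the factor already kills $b(w)\circ c(z)$ and $c(z)\circ b(w)$); $D''(x-z)^{N_{ac}}=0$, where $N_{ac}$ is the locality value of $(a(z),c(z))$; and $D(x-w)^{N_{ab}}=0$, where $N_{ab}$ is the locality value of $(a(z),b(z))$. Hence $E:=D''(x-w)^{N_{ab}}=-D'(x-w)^{N_{ab}}$ is killed both by $(x-z)^{N_{ac}}$ and by $(w-z)^{N_0}$. Applying $\Res_x$ to $E(x-z)^l(w-z)^{N_0}=0$ for each $l\in\mathbb Z_+$ and expanding the left side with the OPE formula for $a(x)\circ c(z)$, I would obtain finite relations
\[
\sum_{p=0}^{N_{ab}}(-1)^{N_{ab}-p}\binom{N_{ab}}{p}\big((a\oo{l+p}c)(z)\circ b(w)\big)(w-z)^{N_0+N_{ab}-p}=0 .
\]
Since $(a\oo l c)(z)=0$ for $l\ge N_{ac}$, the $p=0$ summand allows a downward induction on $l$ proving that every pair $\big((a\oo l c)(z),b(z)\big)$ is local. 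Finally $(a\oo n b)(w)\circ c(z)=\Res_x\big(D(x,w,z)(x-w)^n\big)$, and expanding the $D''$‑contribution by the OPE of $a(x)\circ c(z)$ exhibits it as $\big[\text{a distribution killed by }(w-z)^{N_0}\big]+\sum_{i<N_{ac}}\binom ni(z-w)^{n-i}\big((a\oo i c)(z)\circ b(w)\big)$, a finite sum each term of which is killed by a fixed power of $(w-z)$; so $\big((a\oo n b)(z),c(z)\big)$ is local, even with a bound independent of $n$.

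The step I expect to be the crux is the one just circumvented: using only \eqref{eq:RSym}, the locality of $\big((a\oo n b),c\big)$ reduces to that of $\big((a\oo l c),b\big)$, which reduces back to that of $\big((a\oo q b),c\big)$ with indices that never strictly decrease, so nothing closes. The escape is to bring in the third pairwise locality $D(x-w)^{N_{ab}}=0$: this is what turns $E=D''(x-w)^{N_{ab}}$ into a distribution that is local in two different pairs of variables, and so produces the auxiliary relations that feed the downward induction.

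\emph{Negative part.} It suffices to construct a right‑symmetric algebra $V$ and a distribution $a(z)$ with $(a(z),a(z))$ local while $(a(z),(a\oo 0 a)(z))$ is not, along the lines of Proposition~\ref{prop:Dendr_nonLoc}. I would take $X=\{a_n\mid n\in\mathbb Z\}$, let $F(X)$ be the free right‑symmetric algebra on $X$ (working with a fixed linear basis of $F(X)$, for which one may invoke the Gr\"obner–Shirshov technique as in Proposition~\ref{prop:Dendr_nonLoc}, though it is not strictly needed), and set $V=F(X)/I$ with $I$ generated by the degree‑$2$ elements $a_n\circ a_m-a_0\circ a_{n+m}$, $n\ne 0$. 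Then $a(z)=\sum_n(a_n+I)z^{-n-1}$ has $(a(w)\circ a(z))(w-z)=0$ and $(a\oo 0 a)_m=a_0\circ a_m$, so locality of $(a(z),(a\oo 0 a)(z))$ would force $\sum_{s}(-1)^s\binom Ns\,a_{p-s}\circ(a_0\circ a_{q+s})\in I$ for some $N$ and all $p,q$; as the defining relations are homogeneous one computes the degree‑$3$ part of $I$ explicitly and checks — exactly as in the proof of Proposition~\ref{prop:Dendr_nonLoc} — that the monomials $a_{p-s}\circ(a_0\circ a_{q+s})$ are linearly independent modulo it, which gives the claim. The reason the obstruction is genuine here and disappears for Novikov algebras is structural: the Novikov identity \eqref{eq:LCom} rewrites $a(x)\circ\big(a(w)\circ a(z)\big)$ as $a(w)\circ\big(a(x)\circ a(z)\big)$, so the residue variable $w$ can be pulled past $a(x)$ and $a(x)\circ a(z)$ annihilated by the locality of $(a,a)$; identity \eqref{eq:RSym} provides no comparable rewriting of a left‑nested product $a(x)\circ\big(a(w)\circ a(z)\big)$.
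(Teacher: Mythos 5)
Your proof is correct, but the positive half takes a genuinely different route from the paper's. The paper proves locality of $((a\oo n b)(z),c(z))$ by embedding $V$ into an auxiliary Lie algebra $\hat V=V\dotplus\bar V$ with $[uv]=u\circ v-v\circ u$, $[\bar u v]=\overline{u\circ v}$, noting that $[\bar a\oo n b](z)=\overline{(a\oo n b)}(z)$, and then simply quoting the Dong Lemma for Lie algebras (Theorem~\ref{thm:DongLemma}); this is shorter and immediately yields the sharp locality estimate $N(a,b)+N(a,c)+N(b,c)-n$ recorded in the Remark following Corollary~\ref{cor:NovikovDongLemma}. You instead re-prove the statement from scratch inside the space of distributions in three variables: the decomposition $D=D'+D''$ via \eqref{eq:RSym}, the observation that $E=D''(x-w)^{N_{ab}}=-D'(x-w)^{N_{ab}}$ is annihilated by both $(x-z)^{N_{ac}}$ and $(w-z)^{N_0}$, and the downward induction on $l$ producing locality of the auxiliary pairs $((a\oo l c)(z),b(z))$ all check out --- the residue manipulations only ever rearrange finite sums, so bilinearity applies everywhere it is invoked. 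What your version buys is independence from the $\hat V$ construction and an explicit account of where each of the three pairwise localities enters (your diagnosis of the circular recursion one falls into with \eqref{eq:RSym} alone, and of why \eqref{eq:LCom} removes the obstruction for the opposite order of products, is exactly right); what it loses is the clean numerical bound. The negative half coincides with the paper's: the same algebra $\mathrm{RS}(X)/I$ with $I$ generated by $a_n\circ a_m-a_0\circ a_{n+m}$, $n\ne 0$, and the same reduction to linear independence of the right-normed words $a_{p-s}\circ(a_0\circ a_{q+s})$ modulo the degree-$3$ component of $I$. The only caveat is that ``exactly as in Proposition~\ref{prop:Dendr_nonLoc}'' hides a genuinely larger computation: right symmetry forces extra degree-$3$ consequences $g_{m,k}=(a_0\circ a_{k+1})\circ a_m-(a_0\circ a_k)\circ a_{m+1}\in I$ beyond the span of $s\circ a_k$ and $a_k\circ s$, which the paper extracts from the compositions $h_{1,m}\circ a_k$; since these are all left-normed, your independence claim survives, but this verification must actually be carried out.
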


\begin{proof}
Note that if two formal distributions over $V$ form a local pair
$(a(z),b(z))$ then we cannot say that the same series are local if
considered as formal distributions over $U(V)$. This is why
we need to recall a relation between pre-Lie and Lie algebras
\cite{GubKol2013}.

Given a right-symmetric algebra $V$,
consider the ``ordinary'' algebra $\hat V$ constructed as follows.
As a linear space, $\hat V$ is a direct sum of two copies of the space~$V$:
\[
 \hat V = V \dotplus \bar V.
\]
A multiplication $(x,y)\mapsto [xy]$, for $x,y\in \hat V$, is defined by
\[
\begin{gathered}
{}
 [uv] = u\circ v-v\circ u, \quad u,v\in V; \\
 [\bar u v] = \overline{u\circ v}, \quad \bar u\in \bar V, \ v\in V; \\
 [u\bar v] = -\overline{v\circ u}, \quad u\in V,\ \bar v \in \bar V; \\
 [\bar u\bar v] = 0,\quad \bar u,\bar v\in \bar V.
\end{gathered}
\]
Then $\hat V$ with the operation $[\cdot\cdot]$ is a Lie algebra.

Assume
\[
x(z) =\sum\limits_{n\in \mathbb Z} x_n z^{-n-1}, \quad x_n\in V,\ x\in \{a,b,c\}.
\]
Since $V\subset \hat V$, we may consider these distributions as
elements of $\hat V[[z,z^{-1}]]$.
The distributions with coefficients $\bar x_n$, $x\in \{a,b,c\}$,
belong to the same space of formal distributions over $\hat V$:
\[
\bar x(z) =\sum\limits_{n\in \mathbb Z} \bar x_n z^{-n-1}, \quad
\bar x_n\in \bar V\subset \hat V.
\]
According to the hypothesis,
the pairs of distributions
$(\bar x(z),y(z))$ are local in $\hat V[[z,z^{-1}]]$
for all $x,y\in \{a,b,c\}$.
It follows immediately from the construction that
\[
 [\bar a\oo n b](z)
 =  \sum\limits_{s\in \mathbb Z_+}
   (-1)^s \binom{n}{s} [\bar a_{n-s}b_{m+s}]
 =  \sum\limits_{s\in \mathbb Z_+}
   (-1)^s \binom{n}{s}  \overline{a_{n-s}\circ b_{m+s}}
 =  \overline{(a \oo{n} b)}(z)
\]
for every $n\in \mathbb Z_+$.
Hence, by the Dong Lemma for Lie algebras,
$\overline{(a \oo{n} b)}(z)$ and $c(z)$ form a local pair
of formal distributions over $\hat V$.
The latter means
\[
 0 = [\overline{(a \oo{n} b)}(w) c(z)](w-z)^N
 = \overline{(a\oo n b)(w)\circ c(z)} (w-z)^N
\]
for sufficiently large $N\in \mathbb Z_+$.
Since $\bar V$ is an isomorphic copy of~$V$, the first claim follows:
the pair $((a\oo n b)(z),c(z))$ is local in $V[[z,z^{-1}]]$.

In order to construct a counterexample to approve the second statement,
let us apply Gr\"obner--Shirshov bases method for right-symmetric algebras
proposed in \cite{BokChenLi}.

Suppose $X = \{a_n \mid n\in \mathbb Z \}$ is a set of generators which is well-ordered
in the following way:
\[
 a_0<a_{-1}<a_1<a_{-2}<a_2<\dots .
\]
As in \cite{BokChenLi}, denote by $\mathrm{RS}(X)$ the free right-symmetric algebra
generated by the set~$X$. Let $I$ be the ideal of $\mathrm{RS}(X)$
generated by the set
\[
S = \{h_{n,m} = a_n\circ a_m - a_0\circ a_{n+m} \mid  n,m\in \mathbb Z,\ n\ne 0\},
\]
and let $V$ stand for the quotient
$ \mathrm{RS}(X)/ I$.
Then the formal distribution
\[
 a(z) = \sum\limits_{n\in \mathbb Z} (a_n+I)z^{-n-1} \in V[[z,z^{-1}]]
\]
is local to itself with locality level $N=1$.

Note that it follows from right symmetry that
\[
(a_0\circ a_k)\circ a_m - (a_0\circ a_m)\circ a_k
 =
 a_0\circ (a_k \circ a_m) - a_0\circ (a_m \circ a_k)
\in
 a_0\circ (a_0 \circ a_{m+k}) - a_0\circ (a_0 \circ a_{k+m})
 +  I
\]
for all $n,m\in \mathbb Z$.
Hence, $(a_0\circ a_k)\circ a_m - (a_0\circ a_m)\circ a_k \in I$
for all $k, m \in \mathbb Z$.

Next, consider
\begin{multline*}
I\ni h_{1,m}\circ a_k
= a_1\circ (a_m\circ a_k) + (a_1\circ a_k)\circ a_m - a_1\circ (a_k\circ a_m) - (a_0\circ a_{m+1})\circ a_k \\
=
  a_1\circ h_{m,k} + a_1\circ (a_0\circ a_{m+k})
+ h_{1,k}\circ a_m + (a_0\circ a_{k+1})\circ a_m
- a_1 \circ h_{k,m} -a_1\circ (a_0\circ a_{k+m})
- (a_0\circ a_{m+1})\circ a_k \\
\in
    (a_0\circ a_{k+1})\circ a_m - (a_0\circ a_{k})\circ a_{m+1} +I.
\end{multline*}
Therefore,
\[
g_{m,k} =  (a_0\circ a_{k+1})\circ a_m - (a_0\circ a_{k})\circ a_{m+1} \in I
\]
for all $k,m\in \mathbb Z$.

Since the generators $S$ of $I$ are homogeneous, it is straightforward
to find all elements of degree 3 in the Gr\"obner--Shirshov bases
of~$I$.
In fact, according to the scheme from \cite{BokChenLi}
we only need to add all compositions of right multiplication
\begin{equation}\label{eq:CompPreLie}
 h_{n,m}\circ a_k, \quad a_k<a_m.
\end{equation}
It is easy to see that all these compositions are trivial modulo the set of relations
the set
\[
 S' = \{h_{n,m}, g_{m,k} \mid n,m,k \in \mathbb Z, n\ne 0 \}.
\]
Namely,
 the principal monomial of $h_{n,m}$ (according to the same order as in \cite{BokChenLi}) is $a_n\circ a_m$,
 $(a_n\circ a_m)\circ a_k$ is not a ``good'' word,
and
\begin{multline*}
h_{n,m}\circ a_k
= a_n\circ (a_m\circ a_k) + (a_n\circ a_k)\circ a_m - a_n\circ (a_k\circ a_m) - (a_0\circ a_{n+m})\circ a_k \\
=
  a_n\circ h_{m,k} + a_n\circ (a_0\circ a_{m+k})
+ h_{n,k}\circ a_m + (a_0\circ a_{n+k})\circ a_m
- a_n \circ h_{k,m} -a_n\circ (a_0\circ a_{k+m})
- (a_0\circ a_{n+m})\circ a_k \\
=
  a_n\circ h_{m,k} + h_{n,k}\circ a_m - a_n \circ h_{k,m}
  + (a_0\circ a_{n+k})\circ a_m - (a_0\circ a_{n+m})\circ a_k  \\
=
  a_n\circ h_{m,k} + h_{n,k}\circ a_m - a_n \circ h_{k,m}
  +\sum\limits_{s=0}^{k-m} g_{m+s,n+k-s}.
\end{multline*}
Principal monomials of all summands in the right-hand side are smaller than
$(a_n\circ a_m)\circ a_k$, as needed for triviality of a composition.

Hence, $S'$ is the degree 3 component of a Gr\"obner--Shirshov basis
of the ideal~$I$.

Return to the formal distribution $a(z) \in A[[z,z^{-1}]]$
and assume
$(a(z), (a\oo 0 a)(z)) $ is a local pair.
Then there exists $N\in \mathbb Z_+$ such that
\[
 \sum\limits_{s\in \mathbb Z_+} (-1)^s \binom{N}{s}
a_{n-s}\circ (a_0 \circ a_{m+s}) \in I
 \]
for all $n,m\in \mathbb Z$. Since all words in this combination are $S'$-reduced,
they are linearly indepedent modulo~$I$, a contradiction.
\end{proof}

Now we turn to the particular case when $V$ is a Novikov algebra,
i.e., satisfies the identities \eqref{eq:RSym} and \eqref{eq:LCom}.

\begin{corollary}\label{cor:NovikovDongLemma}
In the conditions of Theorem~\ref{thm:DongPreLie},
assume $V$ is a Novikov algebra.
Then the formal distributions $a(z)$, $(b\oo n c)(z)$
form a local pair in $V[[z,z^{-1}]]$.
\end{corollary}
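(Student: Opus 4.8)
The plan is to argue directly from the residue definition of the $n$-product, using the left-commutativity identity~\eqref{eq:LCom} as the single extra feature that a Novikov algebra has over a general pre-Lie algebra. This is essentially forced: Theorem~\ref{thm:DongPreLie} shows that in the pre-Lie case it is precisely the pair $(a(z),(b\oo n c)(z))$ that may fail to be local, so any proof must bring~\eqref{eq:LCom} to bear on that pair. Unlike the first half of Theorem~\ref{thm:DongPreLie}, I would not pass to the Lie algebra $\hat V$ here; a direct manipulation of formal distributions is shorter and more transparent.

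First I would introduce a fresh residue variable $u$ and write
\[
 a(w)\circ (b\oo n c)(z) = a(w)\circ\Big(\Res\limits_{u=0} b(u)\circ c(z)\,(u-z)^n\Big)
 = \Res\limits_{u=0}\big(a(w)\circ(b(u)\circ c(z))\big)(u-z)^n,
\]
the second equality being $\Bbbk$-linearity of $a(w)\circ(\cdot)$ together with the fact that neither $a(w)$ nor the polynomial $(u-z)^n$ involves $u$, so both commute past $\Res\limits_{u=0}$. Then I would apply~\eqref{eq:LCom} coefficientwise, i.e.\ $a_i\circ(b_j\circ c_k)=b_j\circ(a_i\circ c_k)$ for all $i,j,k\in\mathbb Z$, which rewrites the expression under the residue as $b(u)\circ(a(w)\circ c(z))$:
\[
 a(w)\circ(b\oo n c)(z) = \Res\limits_{u=0}\big(b(u)\circ(a(w)\circ c(z))\big)(u-z)^n.
\]

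Now the pair $(a(z),c(z))$ is local by hypothesis, so there is $N\in\mathbb Z_+$ with $a(w)\circ c(z)\,(w-z)^N=0$. Multiplying the last identity by $(w-z)^N$ --- a polynomial in $w,z$ alone, hence commuting past $\Res\limits_{u=0}$, and absorbed into $b(u)\circ(\cdot)$ by $\Bbbk$-bilinearity of $\circ$ --- I obtain
\[
 a(w)\circ(b\oo n c)(z)\,(w-z)^N = \Res\limits_{u=0}\big(b(u)\circ\big(a(w)\circ c(z)\,(w-z)^N\big)\big)(u-z)^n = 0,
\]
so $(a(z),(b\oo n c)(z))$ is local with locality value at most that of $(a(z),c(z))$. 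The only step calling for any thought is the first one: recognizing that~\eqref{eq:LCom} lets $a$ ``reach past'' the inner $b$-product and couple directly to $c$, after which the locality of $(a(z),c(z))$ supplies the required power of $(w-z)$ at no cost. I would also note that this argument uses neither right symmetry~\eqref{eq:RSym} nor the mutual locality of $b$ with $a$ or $c$; combined with the first assertion of Theorem~\ref{thm:DongPreLie}, it yields the full analogue of the Dong Lemma for Novikov algebras.
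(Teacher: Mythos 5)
Your argument is correct and coincides with the paper's own proof: both pull $a(w)\circ(\cdot)$ inside the residue defining $(b\oo n c)(z)$, apply the left-commutativity identity~\eqref{eq:LCom} coefficientwise to swap $a$ past $b$, and then kill the result with the power $(w-z)^N$ coming from the locality of $(a(z),c(z))$. Your added remarks (that the bound $N(a,b\oo n c)\le N(a,c)$ holds and that neither~\eqref{eq:RSym} nor the locality of $b$ with $a$ or $c$ is used) are consistent with the paper's subsequent remark on locality estimates.
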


\begin{proof}
Suppose $(a(w)\circ c(z))(w-z)^N=0$. Then for the same $N\in \mathbb Z_+$ we have
\begin{multline*}
 (a(w)\circ (b\oo n c)(z)) (w-z)^N
 =
 (a(w)\circ \Res\limits_{y=0}(b(y)\circ c(z))(y-z)^n ) (w-z)^N \\
 =
 \Res\limits_{y=0} ((a(w)\circ (b(y)\circ c(z) )) (w-z)^N ) (y-z)^n \\
 =
 \Res\limits_{y=0} (b(y)\circ (a(w)\circ  c(z) ) (w-z)^N ) (y-z)^n
 =0
\end{multline*}
due to the left commutativity \eqref{eq:LCom}.
\end{proof}

As a result, Theorem~\ref{thm:DongPreLie} and Corollary~\ref{cor:NovikovDongLemma}
imply that the Dong Lemma holds for Novikov algebras.

\begin{remark}
The locality estimates from the Dong Lemma for Lie and associative algebras provides us
the corresponding estimates for Novikov algebras.
Suppose $V$ is a Novikov algebra with a binary operation $(x,y)\mapsto x\circ y$,
and let $a(z),b(z),c(z)\in V[[z,z^{-1}]]$ be three pairwise mutually local
formal distributions over $V$:
\[
 a(w)\circ b(z)(w-z)^{N(a,b)}=a(w)\circ c(z)(w-z)^{N(a,c)} = b(w)\circ c(z)(w-z)^{N(b,c)}=0
\]
for some $N(a,b),N(a,c),N(b,c)\in \mathbb Z_+$.
Then by Corollary~\ref{cor:NovikovDongLemma}
$N(a, b\oo n c)\le N(a,c)$,
and it follows from the proof of Theorem~\ref{thm:DongPreLie}
that over the Lie algebra $\hat V$ the distributions
$\bar a(z), b(z), c(z)\in \hat V[[z,z^{-1}]]$
are also pairwise mutually local with
$N(\bar a, x)=N(a,x)$, $x\in \{b,c\}$.
Hence,
\[
 N(a\oo n b), c) = N([\bar a\oo n b], c)\le N(a,b)+N(a,c)+N(b,c)-n
\]
(see, e.g., \cite{Roit99}).
\end{remark}

\section{On the formal distributions over replicated algebras}

The class of right-symmetric algebras includes the variety of associative
algebras. On the other hand, as mentioned above,
the notion of a right-symmetric algebra
appears as a result of the dendriform splitting procedure (see \cite{BCGN2013})
applied to the defining identities of Lie algebras.
It is reasonable to consider the result of a dual procedure (called ``replication'' in \cite{BG_HK2021}) applied to the same variety of Lie algebras.
This is a well-known class of Leibniz algebras which is often considered
as a ``non-commutative'' version of Lie algebras \cite{Loday93}.

Recall that a linear space $L$ equipped with a bilinear operation $(x,y)\mapsto [xy]$
is said to be a {\em Leibniz algebra} if
\[
 [x[yz]]-[y[xz]] = [[xy]z]
\]
for all $x,y,z \in L$.

Note that for every Leibniz algebra $L$ the subspace $I\subset L$ spanned
by the set $\{[ab]+[ba] \mid a,b\in L\}$ is an ideal of $L$, and the quotient
$\bar L = L/I$ is a Lie algebra.
Define an algebra $\hat L$ in the following way.
As a linear space, $\hat L$ is a direct sum of $\bar L$ and $L$. Assuming
$\bar a = a+I\in \bar L$ for $a\in L$, define a product $(x,y)\mapsto [x, y]$
on $\hat L = \bar L\dotplus L$ by the rule
\[
 \begin{gathered}
 {}
[u,v]=0, \quad u,v\in L; \\
[\bar u,v] = [uv], \quad \bar u\in \bar L,\ v\in L; \\
[u,\bar v] = -[vu], \quad u\in L,\ \bar v\in \bar L; \\
[\bar u,\bar v] = \overline{[uv]}, \quad \bar u,\bar v\in \bar L.
 \end{gathered}
\]
Then $\hat L$ is a Lie algebra relative to the operation $[\cdot,\cdot ]$.

\begin{proposition}\label{prop:Dong-Leib}
Let $L$ be a Leibniz algebra, and let
$a(z)$, $b(z)$, $c(z)$ be pairwise local formal distributions over~$L$.
Then the pairs  of formal distributions
$(a\oo{n} b)(z), c(z)$ and $a(z), (b\oo n c)(z)$ are local
for every $n\in \mathbb Z_+$.
\end{proposition}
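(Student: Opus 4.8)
The plan is to mimic the proof of Theorem~\ref{thm:DongPreLie}: the Leibniz algebra $L$ has already been embedded into a Lie algebra $\hat L = \bar L \dotplus L$, so I would lift the formal distributions to $\hat L[[z,z^{-1}]]$ and invoke the Dong Lemma for Lie algebras (Theorem~\ref{thm:DongLemma}). For $x\in\{a,b,c\}$ write $x(z)=\sum_{n\in\mathbb Z} x_n z^{-n-1}$ with $x_n\in L$, and also consider the distributions $\bar x(z)=\sum_{n\in\mathbb Z}\bar x_n z^{-n-1}$ with $\bar x_n=x_n+I\in\bar L\subset\hat L$. The first thing to check is that the pairwise locality of $a(z),b(z),c(z)$ over $L$ forces pairwise locality of the six distributions $a(z),b(z),c(z),\bar a(z),\bar b(z),\bar c(z)$ over $\hat L$; this is immediate from the definition of $[\cdot,\cdot]$ on $\hat L$, since each structure constant $[x_i,y_j]$, $[\bar x_i,y_j]$, $[x_i,\bar y_j]$, $[\bar x_i,\bar y_j]$ is, up to sign, $[x_iy_j]$ or its image in $\bar L$, so the coefficient relations \eqref{eq:LocalityCoeff} for the $\hat L$-distributions reduce to those for the $L$-distributions (and $[u,v]=0$ for $u,v\in L$ even gives $\succ$-style triviality for the pair $(a(z),b(z))$ in $L$ itself).

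The second step is the key computation of the $n$-products inside $\hat L$. I would verify, exactly as in the pre-Lie case, that
\[
 [\bar a \oo n b](z) = \sum_{s\in\mathbb Z_+}(-1)^s\binom{n}{s}[\bar a_{n-s}b_{m+s}]z^{-m-1}
 = \sum_{s\in\mathbb Z_+}(-1)^s\binom{n}{s}\overline{[a_{n-s}b_{m+s}]}z^{-m-1}
 = \overline{(a\oo n b)}(z),
\]
and similarly $[a\oo n \bar b](z) = -\,\overline{(b\oo n a)}(z)$ (watch the sign coming from $[u,\bar v]=-[vu]$ — one must be careful whether the convolution index falls on the barred or unbarred factor). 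By the Dong Lemma for Lie algebras applied to the local triple $\bar a(z),b(z),c(z)$ over $\hat L$, the pair $\overline{(a\oo n b)}(z),c(z)$ is local; since $\overline{(a\oo n b)}(z)=[\bar a\oo n b](z)$ and the map $L\to\bar L$, $v\mapsto\bar v$, becomes relevant only after noting that the unbarred projection of a local relation in $\hat L$ gives a local relation downstairs — here the cleanest route is to observe $[\,\overline{(a\oo n b)}(w),\,c(z)\,] = \overline{[(a\oo n b)(w)\,c(z)]}$, and $\bar L$ is literally the Lie-algebra quotient whose bracket is induced by $[\cdot\cdot]$ on $L$, so vanishing of $\overline{[(a\oo n b)(w)\,c(z)]}(w-z)^N$ together with the Leibniz identity does not by itself give vanishing of $[(a\oo n b)(w)\,c(z)](w-z)^N$. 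This is the point that needs genuine care rather than routine bookkeeping.

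The honest resolution, and I expect this to be the main obstacle, is to run the argument twice: once with $\bar a(z),b(z),c(z)$ and once with $a(z),\bar b(z),\bar c(z)$ (or the analogous choices), so that between them the two Leibniz products $[(a\oo n b)(w)\,c(z)]$ — namely $\overline{[\cdot\cdot]}$-valued and the genuine $L$-valued one — are both captured. Concretely, to get locality of $\bigl((a\oo n b)(z),c(z)\bigr)$ in $L[[z,z^{-1}]]$ I would use the triple $\bar a(z),\bar b(z),c(z)$: then $[\bar a\oo n \bar b](z)=\overline{[\bar a\oo n b]}(z)$ lands in $\bar L$ and $[\,[\bar a\oo n\bar b](w),\,c(z)\,]$ equals $[(a\oo n b)(w)\,c(z)]$ in $L$ by the rule $[\bar u,v]=[uv]$, so the Lie Dong Lemma gives $[(a\oo n b)(w)\,c(z)](w-z)^N=0$ directly in $L$. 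Likewise for $\bigl(a(z),(b\oo n c)(z)\bigr)$ I would use $a(z),\bar b(z),\bar c(z)$, noting $[b\oo n \bar c](z)$ or $[\bar b\oo n \bar c](z)$ reproduces $(b\oo n c)(z)$ up to the structure-constant dictionary and that $[u,\bar v]=-[vu]$ turns the Lie-local relation over $\hat L$ into the desired $L$-local relation (with locality value unchanged). Once the right barred/unbarred triples are selected, each case is a one-line application of Theorem~\ref{thm:DongLemma} plus the $n$-product identity, so no further calculation is needed; I would close by remarking that, as in the Novikov case, this also yields the locality estimate $N\bigl((a\oo n b),c\bigr)\le N(a,b)+N(a,c)+N(b,c)-n$ inherited from the Lie Dong Lemma.
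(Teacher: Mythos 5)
Your overall strategy is exactly the paper's: lift everything to the Lie algebra $\hat L=\bar L\dotplus L$, apply the Dong Lemma there, and translate back through the structure constants. The first claim is handled correctly — the paper likewise takes the pair $([\bar a\oo n \bar b](z),c(z))$ and uses the rule $[\bar u,v]=[uv]$. But note that your ``key computation'' $[\bar a\oo n b](z)=\overline{(a\oo n b)}(z)$ is transplanted from the pre-Lie case and is wrong for this $\hat L$: here $[\bar u,v]=[uv]\in L$ is \emph{unbarred}, so $[\bar a\oo n b](z)=(a\oo n b)(z)$ with coefficients in $L\subset\hat L$. You effectively correct this later when you switch to the triple $\bar a(z),\bar b(z),c(z)$, so the first pair survives.

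The genuine gap is in your treatment of the pair $(a(z),(b\oo n c)(z))$. Applying the Dong Lemma to your chosen triple $a(z),\bar b(z),\bar c(z)$ gives locality of $(a(z),[\bar b\oo n \bar c](z))=(a(z),\overline{(b\oo n c)}(z))$, and the rule $[u,\bar v]=-[vu]$ converts this into $[(b\oo n c)(z)\,a(w)](w-z)^N=0$, i.e.\ locality of the \emph{reversed} pair $((b\oo n c)(z),a(z))$. Since the Leibniz bracket is not antisymmetric, this is not the claimed statement; after relabelling $a,b,c$ it is just your first claim over again. Your fallback $[b\oo n \bar c](z)$ does not help either: its coefficients are $-[c_{m+s}b_{n-s}]$, with the convolution index on the wrong factor, so it is not $(b\oo n c)(z)$ ``up to the structure-constant dictionary.'' The correct choice — the one the paper makes — is the triple $\bar a(z),\bar b(z),c(z)$: then $[\bar b\oo n c](z)=(b\oo n c)(z)$ with coefficients in $L$, and the Dong Lemma for the pair $(\bar a(z),[\bar b\oo n c](z))$ together with $[\bar u,v]=[uv]$ yields $[a(w)(b\oo n c)(z)](w-z)^N=0$ directly in $L$. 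With that one substitution your argument closes.
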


\begin{proof}
Since the map $\tau : L\to \bar L$, $\tau(x)=\bar x$,
is a homomorphism from the Leibniz algebra $L$ onto the Lie algebra $\bar L$,
the locality of a pair $a(z),b(z)\in L[[z,z^{-1}]]$
implies that the pairs $(\bar x(z), y(z))$, $(x(z),\bar y(z))$, $(\bar x(z), \bar y(z))$
are local in $\hat L[[z,z^{-1}]]$ for all $x,y \in \{a,b,c\}$.
By the Dong Lemma applied to the Lie algebra $\hat L$,
we obtain the pairs
$([\bar a\oo n \bar b](z), c(z))$ and $(\bar a(z), [\bar b\oo n c](z))$
are local in $\hat L[[z,z^{-1}]]$.
By the definition of $\hat L$, the latter implies the required locality.
\end{proof}

\begin{remark}
In the very similar way, the Dong Lemma holds
for formal distributions over {\em associative dialgebras} \cite{LodayPir93},
i.e.,
algebras with two bilinear operations $(x,y)\mapsto x\vdash y$ and
$(x,y)\mapsto (x\dashv y)$ satisfying the axioms
\[
 (x\dashv y)\vdash z = (x\vdash y)\vdash z, \quad
 x\dashv (y\vdash z) = x\dashv (y\dashv z)
\]
along with
replicated associativity
\[
(x\vdash y)\vdash z -  x\vdash (y\vdash z)
=
(x\vdash y)\dashv z -  x\vdash (y\dashv z)
=
(x\dashv y)\dashv z -  x\dashv (y\dashv z)=0.
\]
In this case, as for pre-associative algebras,
we have eight locality statements to be checked
similarly to what is done in Proposition~\ref{prop:Dong-Leib}.
\end{remark}

\end{document}